\newcommand{\beqa}{\begin{eqnarray*}}
\newcommand{\eeqa}{\end{eqnarray*}}
\newcommand{\beqn}{\begin{eqnarray}}
\newcommand{\eeqn}{\end{eqnarray}}
\newcommand{\bT}{\mathbb T}
\newcommand{\ov}{\overline}
\newcommand{\al}{\alpha}
\newcommand{\be}{\beta}
\newcommand{\ga}{\gamma}
\newcommand{\la}{\lambda}
\newcounter{cnt1}
\newcounter{cnt2}
\newcounter{cnt3}
\newcommand{\blr}{\begin{list}{$($\roman{cnt1}$)$}
 {\usecounter{cnt1} \setlength{\topsep}{0pt}
 \setlength{\itemsep}{0pt}}}
\newcommand{\bla}{\begin{list}{$($\alph{cnt2}$)$}
 {\usecounter{cnt2} \setlength{\topsep}{0pt}
 \setlength{\itemsep}{0pt}}}
\newcommand{\bln}{\begin{list}{$($\arabic{cnt3}$)$}
 {\usecounter{cnt3} \setlength{\topsep}{0pt}
 \setlength{\itemsep}{0pt}}}
\newcommand{\el}{\end{list}}
\newtheorem{thm}{Theorem}[section]
\newtheorem{lem}[thm]{Lemma}
\newtheorem{Def}[thm]{Definition}
\newtheorem{rem}[thm]{Remark}
\newcommand{\Rem}{\begin{rem} \rm}
\newcommand{\bdfn}{\begin{Def} \rm}
\newcommand{\edfn}{\end{Def}}
\newcommand{\ba}{\begin{array}}
\newcommand{\ea}{\end{array}}
\begin{document}
\sloppy

\title[Generalized 3-circular projections]
{Generalized 3-circular projections in some Banach spaces}

\author[Abubaker]{A. B. Abubaker}
\address[Abdullah Bin Abubaker]{Department of Mathematics and Statistics\\
Indian Institute of Technology Kanpur \\
India, \textit{E-mail~:} \textit{abdullah@iitk.ac.in}}

\author[S Dutta]{S.\ Dutta}
\address[S Dutta]{Department of Mathematics and Statistics\\
Indian Institute of Technology Kanpur \\
India, \textit{E-mail~:} \textit{sudipta@iitk.ac.in}}

\subjclass[2000]{47L05; 46B20}.

\keywords{Isometry, Generalized 3-circular projection}

\begin{abstract}
Recently in a series of papers it is observed that in many Banach
spaces, which include classical spaces $C(\Omega)$ and $L_p$-spaces,
$1 \leq p < \infty, p \neq 2$, any generalized bi-circular
projection $P$ is given by $P = \frac{I+T}{2}$, where $I$ is the
identity operator of the space and $T$ is a reflection, that is, $T$
is a surjective isometry with $T^2 = I$. For surjective isometries
of order $n \geq 3$, the corresponding notion of projection is
generalized $n$-circular projection as defined in \cite{AD}. In this
paper we show that in a Banach space $X$, if generalized bi-circular
projections are given by $\frac{I+T}{2}$ where $T$ is a reflection,
then any generalized $n$-circular projection $P$, $n \geq 3$, is
given by $P = \frac{I+T+T^2+\cdots+T^{n-1}}{n}$ where $T$ is a
surjective isometry and $T^n = I$. We prove our results for $n=3$
and for $n > 3$, the proof remains same except for routine
modifications.
\end{abstract}

\maketitle

\section{Introduction}
Let $X$ be a complex Banach space and $\bT$ denote the unit circle
in the complex plane. A projection $P$ on $X$ is said to be a
generalized bi-circular projection (hence forth GBP) if there exists
a $\la \in \bT \setminus \{1\}$ such that $P + \la(I-P)$ is a
surjective isometry on $X$. Here $I$ denotes the identity operator
on $X$.

It is easy to observe that any GBP is a bi-contractive projection.
It was proved in \cite{Lima} that any bi-contractive projection on
CL-spaces (which includes $C(\Omega)$ - $\Omega$ compact Hausdorff)
is given by $P = \frac{I+T}{2}$ where $I$ is the identity operator
of the space and $T$ is a reflection, that is, $T$ is a surjective
isometry of the space with $T^2 = I$.

Recently in a series of papers (see \cite{Bo, BJ1, BJ2, Fosner,
Lin}) it is observed that in many Banach spaces, the above holds true, that
is, a GBP of the space is always given by $\frac{I+T}{2}$ where $I$
is the identity operator and $T$ is a reflection. In particular,
this class of Banach spaces includes classical $L_p$-spaces, $1 \leq
p \leq \infty, p \neq 2$ and $C(\Omega, X)$ - the space of $X$
valued continuous functions on a compact Hausdorff space $\Omega$,
where $X$ is a Banach space such that vector valued Banach Stone
Theorem holds on $C(\Omega, X)$.

We note that in the case of GBP, if $P + \la (I-P)$ is a surjective
isometry and $\la \in \bT \setminus \{1\}$ is of infinite order then
$P$ is a hermitian projection (see \cite{Lin}). Such projections
were called trivial in \cite{DR, Lin}.

Suppose $X$ is a complex Banach space and $T$ is a surjective
isometry of $X$ such that $T^n = I, n \geq 2$. Suppose $P =
\frac{I+T+T^2+ T^{n-1}}{n}$ is a projection on $X$. Let $\la_0 =1,
\la_1, \la_2, \cdots, \la_{n-1}$ be the $n$ distinct roots of
identity. For $i = 1, 2, \cdots, n-1$, we define $P_i = \frac{I+
\ov{\la_i}T + \ov{\la_i}^2 T^2 + \cdots + \ov{\la_{i}}^{n-1}
T^{n-1}}{n}$. Then each $P_i$ is a projection, $P_0 \oplus P_1 \oplus
P_2 \oplus \cdots \oplus P_{n-1} = I$ and $P_0 + \la_1 P_1 + \la_2
P_2 + \cdots + \la_{n-1} P_{n-1} = T$.

For $n \geq 3$, we define

\bdfn \label{GNP} Let $X$ be a complex Banach space. A projection
$P_0$ on $X$ is said to be a generalized $n$-circular projection, $n
\geq 3$, if there exist $\la_1, \la_2, \cdots, \la_{n-1} \in \bT
\setminus \{1\}$, $\la_i,\ i= 1, 2, \cdots, n-1$ are of finite order
and projections $P_1, P_2, \cdots, P_{n-1}$ on $X$ such that \bla

\item $\la_i \neq \la_j$ for $i \neq j$

\item $P_0 \oplus P_1 \oplus \cdots \oplus P_{n-1} = I$

\item $P_0 + \la_1 P_1 + \cdots + \la_{n-1} P_{n-1}$ is a
surjective isometry. \el \edfn

\Rem In \cite{AD} generalized $n$-circular projection was defined
with an extra assumption that $i \neq j, i, j = 1, 2, \cdots, n-1$
then $\la_i \neq \pm \la_j$. It turns out for the validity of
results there and also in this paper, that assumption is not
necessary.
\end{rem}

The purpose of this note is to show that if in $X$ every GBP is
given by $\frac{I+T}{2}$ for reflection $T$, then for $n \geq 3$,
every generalized $n$-circular projection is given by
$\frac{I+T+T^2+ T^{n-1}}{n}$ where $T^n = I$. Precisely, we show

\begin{thm} \label{main}
Let $X$ be a complex Banach space. Suppose every GBP on $X$ is given
by $\frac{I+T}{2}$ where $T$ is a reflection. Let $P_0$ be a
generalized 3-circular projection on $X$. Then there exists an
surjective isometry $T$ on $X$ such that \bla

\item $P_0 + \omega P_1 + \omega^2 P_2 = T$ where $P_1$ and $P_2$
are as in Definition~\ref{GNP} and $\omega$ is a cube root of
identity,

\item $T^3 = I$. \el

Hence $P_0 = \frac{I + T + T^2}{3}$.
\end{thm}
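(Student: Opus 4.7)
The aim is to produce the surjective isometry $\tilde T := P_0 + \omega P_1 + \omega^2 P_2$; once that is in hand, $\tilde T^3 = I$ and $P_0 = (I + \tilde T + \tilde T^2)/3$ follow automatically from $\omega^3 = 1$, $1+\omega+\omega^2 = 0$, and $P_0 + P_1 + P_2 = I$.

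I would use powers of the surjective isometry $T := P_0 + \lambda_1 P_1 + \lambda_2 P_2$ supplied by Definition~\ref{GNP} to exhibit suitable sub-projections as generalized bi-circular projections, and then invoke the hypothesis to extract reflection isometries. Write $m_i$ for the order of $\lambda_i$ and $d$ for the order of $\lambda_1 \lambda_2^{-1}$. The identity $T^{m_1} = P_0 + P_1 + \lambda_2^{m_1} P_2$ shows that when $\lambda_2^{m_1} \neq 1$ (equivalently $m_2 \nmid m_1$), $P_0 + P_1$ is a GBP, and the hypothesis forces $2(P_0 + P_1) - I = P_0 + P_1 - P_2$ to be a surjective reflection. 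Symmetric computations with $T^{m_2}$ and $T^d = P_0 + \lambda_1^d(I - P_0)$ yield, under the corresponding non-degeneracy conditions, the further reflections $P_0 - P_1 + P_2$ and $-P_0 + P_1 + P_2$ as surjective isometries. All operators in sight lie in the commutative algebra generated by $P_0, P_1, P_2$, so their products with $T$ and its powers produce isometries whose spectral coefficients traverse $(\pm 1, \pm \lambda_1^k, \pm \lambda_2^k)$.

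A case analysis on $(m_1, m_2)$ then locates $\tilde T$ among this family. The trivial case $m_1 = m_2 = 3$ forces $\{\lambda_1, \lambda_2\} = \{\omega, \omega^2\}$ and gives $T = \tilde T$ directly; when $\lambda_1, \lambda_2 \in \{\pm\omega, \pm\omega^2\}$, one realizes $\tilde T$ as a product of $T$ with one or two of the reflections (for instance, $\tilde T = T \cdot (P_0 - P_1 + P_2)$ in appropriate subcases). The main obstacle is the regime $m_1 = m_2 = m$ with $\lambda_1 \lambda_2^{-1}$ of order $m$, which for $m \geq 5$ places $\lambda_1, \lambda_2$ entirely outside $\{\pm\omega, \pm\omega^2\}$: here no power of $T$ realizes any sub-projection as a GBP, so the reflection-extraction strategy contributes nothing, and the argument must exploit the hypothesis more subtly — likely by producing an auxiliary isometry that witnesses some $P_i$ as a GBP through external means and then bootstrapping. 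This delicate case is where the main effort of the proof is concentrated.
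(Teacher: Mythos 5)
Your opening move is sound and coincides with the paper's: raising $T=P_0+\lambda_1P_1+\lambda_2P_2$ to the power $m_1$ collapses two eigenvalues, exhibits $P_0+P_1$ (etc.) as a GBP, and the hypothesis then yields the reflection $2(P_0+P_1)-I$; this is exactly the mechanism behind Lemma~\ref{lem1}, which shows $\lambda_1$ and $\lambda_2$ have the same order. But the proposal stops precisely where the content of the theorem begins. The configuration you flag as the ``main obstacle'' ($\lambda_1,\lambda_2$ outside $\{\pm\omega,\pm\omega^2\}$) is not a case in which $\tilde T$ must be manufactured by other means --- it is a case that has to be shown \emph{not to occur at all}. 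The theorem is proved by showing that, under the hypothesis on $X$, the only admissible spectral data is $\{\lambda_1,\lambda_2\}=\{\omega,\omega^2\}$, so that $T$ itself is the required isometry with $T^3=I$. Your closing suggestion (``producing an auxiliary isometry that witnesses some $P_i$ as a GBP through external means and then bootstrapping'') is a placeholder, not an argument: as you yourself observe, no power of $T$ supplies such an isometry in the generic case, and nothing else in the proposal does either. So the central case is genuinely open in your plan, and the cases you do handle are ones the theorem ultimately shows to be vacuous.

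What fills this gap in the paper is a different kind of argument altogether: pass to the adjoint $S=T^*$ and $Q_i=P_i^*$, use the polynomial identity $(S-\lambda_2I)(S-\lambda_1I)(S-I)=0$ of Lemma~\ref{lem2} to confine the orbit $x^*, Sx^*, S^2x^*, S^3x^*$ of an arbitrary functional to a low-dimensional span, and then solve the resulting small linear systems in the coordinates $Q_0x^*, Q_1x^*, Q_2x^*$ (Lemma~\ref{lem3}). This rules out period-two orbits, forces every nonzero $x^*$ into some $R(Q_i)$, and finally forces $\lambda_1^3=\lambda_2^3=1$. Some pointwise spectral analysis of this kind (or a substitute for it) is indispensable; without it, the reflection-assembly strategy cannot reach the conclusion, and the proposal does not constitute a proof.
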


\Rem \bla

\item The proof for the case $n > 3$ remains exactly same except for
number of cases to be considered in Lemma~\ref{lem3} in the next
section becomes larger.

\item In \cite{BJ2}, a GBP on $\ell_\infty$ was constructed which is
not given by average of identity and a surjective isometry of order
2. For generalized 3-circular projections, a similar example can
easily be constructed on $\ell_\infty$.\el
\end{rem}

\section{Proof of Theorem~\ref{main}}
We start with the following lemma.

\begin{lem} \label{lem1}
Let $X$ and $P_0$ be as in Theorem~\ref{main} and $P_1, P_2$ as in
Definition~\ref{GNP}. Then $\la_1$ and $\la_2$ are of same order.
\end{lem}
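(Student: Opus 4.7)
The plan is to argue by contradiction. Write $\s_i$ for the order of $\la_i$, arrange labels so that $\s_1 \le \s_2$, and suppose for contradiction that $\s_1 < \s_2$.

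First I would observe that since $P_0, P_1, P_2$ are mutually annihilating projections with $P_0+P_1+P_2 = I$, iterated powers of the defining isometry satisfy $T^k = P_0 + \la_1^k P_1 + \la_2^k P_2$. Taking $k = \s_1$ and using $\la_2^{\s_1} \ne 1$ (because $\s_2 \nmid \s_1$), one gets the surjective isometry
\[
T^{\s_1} \;=\; (P_0+P_1) + \la_2^{\s_1}\bigl(I - (P_0+P_1)\bigr),
\]
which exhibits $P_0 + P_1$ as a generalized bi-circular projection on $X$ with associated unimodular scalar of finite order. The standing hypothesis on $X$ then produces a reflection $R$ with $P_0 + P_1 = (I+R)/2$; explicitly $R = P_0 + P_1 - P_2$ is a surjective isometry with $R^2 = I$.

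Next I would multiply $R$ against $T$ to generate further isometries. Since $R$ commutes with $T$ (both being polynomials in the $P_i$'s), the product $RT = P_0 + \la_1 P_1 + (-\la_2)P_2$ is a surjective isometry, and I would split into two cases. In the case $\la_1 \ne -\la_2$, the operator $RT$ itself is the defining isometry of another generalized 3-circular structure on the same $P_i$'s but with $\la_2$ replaced by $-\la_2$, and the first step can be re-applied to this new datum (which replaces the second eigenvalue's order by the order of $-\la_2$). In the case $\la_1 = -\la_2$, the product collapses to $P_0 + \la_1(I-P_0)$, exhibiting $P_0$ itself as a GBP; the hypothesis then yields a second reflection $S = 2P_0 - I$, so that $R$, $S$, and $RS = P_0 - P_1 + P_2$ are three commuting involutive isometries, and every sign pattern $\e_0 P_0 + \e_1 P_1 + \e_2 P_2$ with $\e_i \in \{\pm 1\}$ is forced to be a surjective isometry on $X$.

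The main obstacle will be extracting an arithmetic contradiction from this accumulated structure. My expectation is that by iterating the above machinery --- substituting newly produced reflections for $R$ and repeatedly running the case split on sign patterns and orders --- one will eventually produce a surjective isometry forcing $\la_2^{\s_1} \in \{\pm 1\}$. Since $\la_2^{\s_1} \ne 1$ by our assumption, this would give $\la_2^{\s_1} = -1$, hence $\s_2 = 2\s_1$; an analogous argument carried out after exchanging the roles of $\la_1$ and $\la_2$ (replacing $T$ by a conjugate such as $RT$) then rules this last possibility out as well, forcing $\s_1 = \s_2$ and completing the proof.
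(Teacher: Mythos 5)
There is a genuine gap: your argument never actually closes. The decisive step is deferred to an ``expectation'' that iterating the reflection machinery ``will eventually produce a surjective isometry forcing $\lambda_2^{\sigma_1}\in\{\pm1\}$'' --- but that is exactly the statement that has to be proved, and nothing in your construction delivers it. Every operator you generate ($R$, $RT$, $S$, and their products) is a polynomial in the commuting idempotents $P_0,P_1,P_2$, so the semigroup they generate is abelian and small; in unfavourable configurations (try $\lambda_1=e^{2\pi i/3}$, $\lambda_2=e^{\pi i/3}$) the only elements of it that exhibit some projection as a GBP are the ones you started with, and the iteration produces no new arithmetic constraints. The paper closes this step in one line by using the standing hypothesis in its sharp form: since $T^{\sigma_1}=(P_0+P_1)+\lambda_2^{\sigma_1}\bigl(I-(P_0+P_1)\bigr)$ is a surjective isometry with $\lambda_2^{\sigma_1}\neq1$, the projection $P_0+P_1$ is a GBP, and the assumption that GBPs arise only as averages with a reflection is read as forcing the accompanying scalar to be $-1$; hence $\lambda_2^{\sigma_1}=-1$ immediately, so $\sigma_2\mid 2\sigma_1$. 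You instead extract only the strictly weaker consequence that $R=2(P_0+P_1)-I$ is an isometry, and that loss of information is precisely why your argument cannot terminate.

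Your endgame is also off. Even granting $\lambda_2^{\sigma_1}=-1$, ``exchanging the roles of $\lambda_1$ and $\lambda_2$ by replacing $T$ with a conjugate such as $RT$'' does nothing useful: $R$ and $T$ commute, so conjugation is trivial, and the product $RT$ merely replaces $\lambda_2$ by $-\lambda_2$ --- it does not interchange the two eigenvalues. The paper's symmetric step is instead to apply the same argument to $T^{\sigma_2}=(P_0+P_2)+\lambda_1^{\sigma_2}\bigl(I-(P_0+P_2)\bigr)$, obtaining $\lambda_1^{\sigma_2}=-1$ and $\sigma_1\mid2\sigma_2$; writing $2\sigma_2=k_1\sigma_1$ and $2\sigma_1=k_2\sigma_2$ gives $k_1k_2=4$, and $\sigma_1<\sigma_2$ forces $\sigma_2=2\sigma_1$, whence $-1=\lambda_1^{\sigma_2}=(\lambda_1^{\sigma_1})^2=1$, a contradiction. (Be aware that this symmetric step silently needs $\lambda_1^{\sigma_2}\neq1$ in order for $P_0+P_2$ to qualify as a GBP; if you rewrite the proof you should address that case explicitly rather than by symmetry.)
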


\begin{proof}
Let $\la^m_1 = \la^n_2 =1$ and $m \neq n$. Without loss of
generality we assume that $m <n$. Let $P_0 + \la_1 P_1 + \la_2 P_2 =
T$ where $T$ is a surjective isometry of $X$. Then $P_0 + \la^m_1
P_1 + \la^m_2 P_2 = (P_0 + P_1) + \la^m_2 P_2 = T^m$. Since $T^m$ is
again a surjective isometry and $P_2 = I - (P_0 + P_1)$, by the
assumption on $X$, $T$ is a reflection and hence we have $\la^m_2 =
-1$. Hence $n$ divides $2m$. Similarly we obtain $\la^n_1 = -1$ and
$m$ divides $2n$. Thus $2n = m k_1, 2m = n k_2$. Thus, $k_1 k_2 =
4$. Since we have assumed $m < n$, this implies $k_1 = 4, k_2 = 1$.
But then $-1 = \la^n_1 = \la^{2m}_1 = 1$ - a contradiction. Hence $m
=n$.
\end{proof}

The proof of the following lemma is straightforward verification and
hence we omit it.

\begin{lem} \label{lem2}
Let $X$ be a complex Banach space and $P_0$ a generalized 3-circular
projection on $X$. If $P_0 \oplus \la_1 P_1 \oplus \la_2 P_2 = T$
then $(T-\la_2I)(T-\la_1I)(T-I) =0$.
\end{lem}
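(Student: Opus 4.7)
The plan is to exploit the fact that the $P_i$ form a resolution of the identity into mutually orthogonal projections, so that $T$ has a clean polynomial calculus. Explicitly, from $P_0 \oplus P_1 \oplus P_2 = I$ one gets $P_i^2 = P_i$ and $P_i P_j = 0$ for $i \neq j$. Squaring $T = P_0 + \la_1 P_1 + \la_2 P_2$ and using orthogonality yields $T^2 = P_0 + \la_1^2 P_1 + \la_2^2 P_2$, and more generally, by induction, $T^k = P_0 + \la_1^k P_1 + \la_2^k P_2$ for every $k \geq 0$. Hence for any polynomial $p$ one has
\[
p(T) \;=\; p(1)\,P_0 \,+\, p(\la_1)\,P_1 \,+\, p(\la_2)\,P_2.
\]

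The second step is to apply this identity to the specific polynomial $p(z) = (z - \la_2)(z - \la_1)(z - 1)$. Since $p$ vanishes at each of the three eigenvalues $1, \la_1, \la_2$, the right-hand side collapses to $0$, which is exactly the claimed relation $(T - \la_2 I)(T - \la_1 I)(T - I) = 0$.

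There is really no obstacle here: the only thing one must verify carefully is the orthogonality-based induction formula for $T^k$, and this is immediate from the relations $P_iP_j = \delta_{ij}P_i$. An equivalent (and essentially identical) route would be to check the identity $(T - \la_2 I)(T - \la_1 I)(T - I) x = 0$ on each of the three subspaces $P_0 X$, $P_1 X$, $P_2 X$ separately, using that $T$ acts on $P_i X$ as multiplication by the corresponding scalar in $\{1, \la_1, \la_2\}$, together with the fact that the three factors commute (so one may place the vanishing factor on the right in each case). Either way the verification is purely algebraic, which is why the authors relegate it to the reader.
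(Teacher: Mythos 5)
Your proof is correct, and it is precisely the ``straightforward verification'' that the paper omits: the orthogonality relations $P_iP_j=\delta_{ij}P_i$ encoded in $P_0\oplus P_1\oplus P_2=I$ give $p(T)=p(1)P_0+p(\la_1)P_1+p(\la_2)P_2$ for any polynomial $p$, and the polynomial $(z-\la_2)(z-\la_1)(z-1)$ vanishes at $1,\la_1,\la_2$. Nothing further is needed.
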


For convenience of notation we write $T^* = S$ and $Q_i = P^*_i$ for
$i=0, 1, 2$. Note that $S$ is a surjective isometry of $X^*$ and
$Q_i,\ i = 0, 1, 2$ are projections on $X^*$ such that $Q_0 \oplus
Q_1 \oplus Q_2 = I$ - the identity operator on $X^*$, $Q_0 + \la_1
Q_1 + \la_2 Q_2 = S$. Also for any $n \geq 1$ we have $S^n = Q_0 +
\la^n_1 Q_1 + \la^n_2 Q_2$.

The following lemma is crucial in our proof.

\begin{lem} \label{lem3}
Let $X$ and $P_0$ be as in Theorem~\ref{main}. With above notation
we have the following. \bla

\item If for some $x^* \in X^*, x^* \neq 0, \ x^* = S x^*$ then $x^*
\in R(Q_0)$.

\item There is no $x^* \in X^*$ such that $x^* = S^2 x^*,\ x^* \neq
S x^*$.

\item If for some $x^* \in X^*, x^* \neq 0,\ x^* \neq S x^* \neq S^2
x^* \neq S^3 x^*$ then there exists $i = 1, 2$ such that $x^* \in
R(Q_i)$. \el
\end{lem}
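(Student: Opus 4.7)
My plan is to work throughout in the spectral decomposition $X^* = R(Q_0) \oplus R(Q_1) \oplus R(Q_2)$ induced by $S = Q_0 + \la_1 Q_1 + \la_2 Q_2$. Parts (a) and (b) will be coefficient-matching exercises; part (c) is the only place the standing hypothesis on GBPs enters, and is the technical heart.

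For (a), I will write $x^* = Q_0 x^* + Q_1 x^* + Q_2 x^*$ and apply $S$. The equation $S x^* = x^*$ reduces to $(\la_j - 1) Q_j x^* = 0$ for $j = 1, 2$, which forces $Q_1 x^* = Q_2 x^* = 0$ since $\la_j \neq 1$, so $x^* \in R(Q_0)$. For (b), I will split $x^* = \frac{1}{2}(x^* + S x^*) + \frac{1}{2}(x^* - S x^*)$; the first summand is $S$-fixed and hence in $R(Q_0)$ by (a), while $z^* = \frac{1}{2}(x^* - S x^*)$ satisfies $S z^* = -z^*$. Lemma~\ref{lem1} gives $\la_1, \la_2$ a common order $m$; since $\la_1 \neq \la_2$ and $-1$ is the only element of $\bT$ of order $2$, necessarily $m \geq 3$, hence $\la_j \neq -1$. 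Rerunning the coefficient matching on $S z^* = -z^*$ then yields $Q_0 z^* = Q_1 z^* = Q_2 z^* = 0$, so $z^* = 0$, contradicting $x^* \neq S x^*$.

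For (c), I will set $v_j = Q_j x^*$. The chain hypothesis is equivalent, by directness of the sum and $\la_j \neq 1$, to $(v_1, v_2) \neq (0, 0)$. Supposing for contradiction that $x^* \notin R(Q_1) \cup R(Q_2)$, we will be in one of several configurations: $v_0 \neq 0$ with at least one of $v_1, v_2$ nonzero, or $v_0 = 0$ with both $v_1, v_2$ nonzero. The plan in each configuration is to manufacture a GBP on $X$ from the projections $P_j$ and iterates of $T$, invoke the hypothesis to force its scalar to be $-1$, and read off an arithmetic contradiction with the chain inequalities $\la_1^j(\la_1 - 1) v_1 + \la_2^j(\la_2 - 1) v_2 \neq 0$ for $j = 0, 1, 2$. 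The basic building block: if $d$ denotes the order of $\la_1 \ov{\la_2}$ and $d < m$, then $\la_1^d = \la_2^d$ and the identity $T^d = P_0 + \la_1^d (I - P_0)$ exhibits $P_0$ as a GBP with scalar $\la_1^d$; by the hypothesis $\la_1^d = -1$ and so $m = 2d$. Parallel constructions involving $P_1, P_2$ and higher iterates of $T$ will handle the remaining subcases.

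The hard part will be the case analysis in (c). The authors' closing remark that for $n > 3$ the number of cases grows larger already identifies this step as the crux; even at $n = 3$, in each subcase one must locate the right GBP witness inside the group generated by $T$ so that the hypothesis applies cleanly and the arithmetic on the unit circle closes out the contradiction.
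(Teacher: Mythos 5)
Your treatments of (a) and (b) are correct, and in fact cleaner than the paper's: matching components directly in $X^*=R(Q_0)\oplus R(Q_1)\oplus R(Q_2)$ replaces the paper's evaluation at a chosen $x\in X$ (a choice which, in the paper's own proof of (b), is not even always available, since $x^*$ and $Sx^*$ may be linearly dependent there), and your splitting $x^*=\frac12(x^*+Sx^*)+\frac12(x^*-Sx^*)$ together with Lemma~\ref{lem1} closes (b) neatly.

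Part (c), however, is not proved: you offer only a plan and explicitly defer ``the hard part,'' which is the entire content of the assertion. Moreover, your own opening reduction exposes a problem you do not confront. If the chain $x^*\neq Sx^*\neq S^2x^*\neq S^3x^*$ means consecutive distinctness, then, as you say, it is equivalent to $(Q_1x^*,Q_2x^*)\neq(0,0)$ --- and then the statement is simply false: take $x^*=q_0+q_1$ with $0\neq q_0\in R(Q_0)$ and $0\neq q_1\in R(Q_1)$ (concretely $X=\ell_p^3$, $T=\mathrm{diag}(1,\omega,\omega^2)$, $x^*=(1,1,0)$); the chain holds yet $x^*$ lies in no single $R(Q_i)$. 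So no case analysis can succeed until you adopt the stronger reading --- all four iterates pairwise distinct, in particular $x^*\neq S^3x^*$ --- which is how the lemma is actually used in the completion of Theorem~\ref{main}. Beyond that, the mechanism you propose, manufacturing GBPs from powers of $T$ and the $P_j$, yields only global arithmetic constraints on $\lambda_1,\lambda_2$ (exactly as in Lemma~\ref{lem1}) and gives no handle on the particular $x^*$ whose membership in $R(Q_1)\cup R(Q_2)$ must be decided; it also inherits the delicate point that the hypothesis ``$P$ equals $\frac{I+T'}{2}$ for some reflection $T'$'' does not by itself pin the scalar of a given GBP identity to $-1$. This is in any case not the paper's route: the paper proves (c) by linear algebra alone, using Lemma~\ref{lem2} to place $S^3x^*$ in $\mathrm{span}\{x^*,Sx^*,S^2x^*\}$, expressing $x^*$ and $S^3x^*$ in terms of $Sx^*$ and $S^2x^*$, and running a case analysis on which $Q_ix^*$ vanish, with no auxiliary GBPs. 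As it stands your proposal establishes (a) and (b) but leaves (c) --- the part the rest of the argument depends on --- unproved, and the sketched strategy does not converge on the statement.
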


\begin{proof}
\bla

\item Let $x^* \neq 0$ and $x^* = S x^*$. Then $x^*, S x^*, S^2 x^*$
are all equal hence we have,

\beqa x^* &=& Q_0 x^* + Q_1 x^* + Q_2 x^*\\
&=& Q_0 x^* + \la_1 Q_1 x^* + \la_2 Q_2 x^*\\
&=& Q_0 x^* + \la^2_1 Q_1 x^* + \la^2_2 Q_2 x^*. \eeqa

We choose $x \in X$ such that $x^*(x) \neq 0$. Let $Q_0 x^*(x) =
\al, Q_1 x^*(x) = \be, Q_2 x^*(x) = \ga$. The above equations give

\beqa \al + \be + \ga &=&1\\
&=& \al + \la_1 \be + \la_2 \ga\\
&=& \al + \la^2_1 \be + \la^2_2 \ga. \eeqa

Solving which we get $ \be = 0$ and $\ga = 0$ and $\al =1$. Thus
$x^* = Q_0 x^*$ and the assertion is proved.

\item Suppose there exists $x^*$ such that $x^* = S^2 x^*$ and $x^*
\neq S x^*$. In this case we have $S^3 x^* = S x^*$. We choose $x \in
X$ such that $x^*(x) = 1 = S^2 x^*(x)$ and $Sx^*(x) = S^3 x^* (x)
=0$. From Lemma~\ref{lem2} we know $(S-\la_2I)(S-\la_1I)(S-I)x^*
=0$. Evaluating this at $x$ we get $-1 -\la_1 - \la_2 = \la_1 \la_2$
and hence $(\la_1+1)(\la_2+1) = 0$ or $\la_1 = -1$ or $\la_2 = -1$.
If $\la_1 = -1$ then by Lemma~\ref{lem1} we get $\la_2 = 1 \rm{ or }
-1$. By our assumption $\la_2 \neq 1$ and if $\la_2 = -1$ then
$\la_1 = \la_2$ - a contradiction again.

\item From Lemma~\ref{lem2} it follows that $S^3 x^* \in \rm{span}
\{x^*, S x^*, S^2 x^*\}$ for all $x^* \in X^*$. Also from
Lemma~\ref{lem1} we have $S^n = I$ for some finite $n$ and hence
$x^* \in \rm{span} \{S x^*, S^2 x^*\}$ for all $x^* \in X^*$. Thus
given a $x^*$ we can write

$$x^* = \al S x^* + \be S^2 x^*$$
$$S^3 x^* = \al' S x^* + \be' S^2 x^*$$.

We claim if any of $\al, \al', \be, \be'$ equals $0$ then $x^* \in
R(Q_i)$ for one of $i = 1, 2$. To see this, we first observe that if
any of $\al'$ and $\be$ equals $0$ then $x^*$ is a multiple of
$S x^*$ and if $\al$ or $\be'$ is zero then $x^*$ is a multiple of
$S^2 x^*$. Let $x^* = \ga S x^*$ for some $\ga$. Thus we have
$x^* = Q_0 x^* + Q_1 x^* + Q_2 x^* = \ga Q_0 x^* + \ga \la_1 Q_1 x^*
+ \ga \la_2 Q_2 x^*$. Hence $(1-\ga)Q_0 x^* + (1-\ga \la_1) Q_1 x^*
+ (1-\ga \la_2)Q_2 x^* =0$. Now if $Q_0 x^* \neq 0$ we get $\ga =1$
contradicting the assumption that $x^* \neq S x^*$. Hence let $Q_0
x^* = 0$. If both $Q_1 x^*$ and $Q_2 x^*$ are non zero then we get
$\ga \la_1 = \ga \la_2 = 1$ and hence $\la_1 = \la_2$ a
contradiction again. Thus either $Q_1 x^* = 0$ or $Q_2 x^* =0$ and
$x^* = Q_i x^*$ for one of $i= 1, 2$. Similarly if $x^*$ is a multiple of
$S^2 x^*$ we proceed in the same way and use part (b)
to show that $x^* = Q_i x^*$ for one of $i= 1, 2$.

To conclude the proof let $\al, \al', \be, \be'$ are all no zero.
Since $S$ is invertible from the second equality above we obtain
$S^2 x^* = \al' x^* + \be' S x^*$. By the first part, if $Sx^*$ and
$S^2 x^*$ are multiple of each other then we get $x^* \in R(Q_i)$ for
one of $i=1, 2$. Thus let us assume $S x^*$ and $S^2 x^*$ are linearly independent.
Hence we get $\al = - \be \be'$ and $\al' = \frac{1}{\be}$. Hence we have
$x^* = Q_0 x^* + Q_1 x^* + Q_2 x^* = - \be \be'(Q_0x^* + \la_1 Q_1 x^* + \la_2 Q_2 x^*)
+ \be (Q_0 x^* + \la^2_1 Q_1 x^* + \la^2_1 Q_2 x^* = \be(1 - \be
\be')Q_0 x^* + \la_1\be(\la_1 - \be')Q_1 x^* + \la_2\be(\la_2 - \be')
Q_2 x^*$. If $Q_0 x^* = 0$ and $Q_1 x^* \neq 0$ and
$Q_2 x^* \neq 0$ we get $\la_1 = \la_2$ which contradicts our
assumption. Similarly, if $Q_0 x^* \neq 0$ and one of $Q_1 x^*$ and
$Q_2 x^*$ is zero we get $\be' = 1 = \la_1$ or
$\be' = 1 = \la_2$, a contradiction again.

Hence we may assume $Q_i x^* \neq 0$ for $i=0, 1, 2$.
This gives $\be - \be \be' =1, \la_1 \be (\la_1 - \be') =1$ and
$\la_2 \be (\la_2 - \be') =1$.

Similarly $S^3 x^* = \be S x^* - \be' x^*$ gives $\frac{1}{\be} +
\be' = 1, \la^2_1 = (\frac{1}{\be} + \la_1 \be')$ and $\la^2_2 =
(\frac{1}{\be} + \la_2 \be')$. The last two equation implies $(\la_1
- \la_2)(\la_1 + \la_2) = (\la_1 - \la_2)\be'$ and since $\la_1 \neq
\la_2$ we get $\la_1 + \la_2 = \be'$. Thus $\frac{1}\be = 1 - (\la_1
+ \la_2)$. Putting this values in $\la_1 \be (\la_1 - \be') =1$ we
get $\frac{-\la_1 \la_2}{1-\la_1 - \la_2} =1$ or $\la_1 = \la_2 =1$
which contradicts the assumptions on $\la_1, \la_2$. This completes
the proof of part $(b)$.

This completes the proof of the Lemma. \el
\end{proof}

{\em Completion of the proof of Theorem~\ref{main}:} By
Lemma~\ref{lem3} we can conclude that there exists $x^* \neq 0$ such
that $x^* = S^3 x^*$ and $x^* \neq S x^* \neq S^2 x^*$. If $Q_1 x^*
= Q_2 x^* = 0$ then $x^* \in R(Q_0)$. Thus let us assume $Q_i x^*
\neq 0$ for either $i=1,2$. But then $Q_0 x^* + Q_1 x^* + Q_2 x^* =
Q_0 x^* + \la^3_1 Q_1 x^* + \la^3_2 Q_2 x^*$ will imply either
$\la_1$ or $\la_2$ is a cube root of unity and hence by
Lemma~\ref{lem1} we get same for the other.

Thus $T = P_0 + \omega P_1 + \omega^2 P_2$ where $\omega$ is a cube
root of unity. This immediately gives $P_0 = \frac{I+T^2+T^3}{3}$.

\end{document}